\theoremstyle{plain}
\newtheorem{theorem}{Theorem}[section]
\newtheorem{lemma}[theorem]{Lemma}
\newtheorem{proposition}[theorem]{Proposition}
\theoremstyle{definition}
\newtheorem{definition}[theorem]{Definition}
\newtheorem{remark}[theorem]{Remark}
\numberwithin{equation}{section}
\title{A Generalisation on Erd\H{o}s Distinct Subset Sums Problem}
\author{Zijie Gu}
\begin{document}
\maketitle

\begin{abstract}
This paper investigates the Erd\H{o}s distinct subset sums problem in $\mathbb{Z}^k$. Beyond the classical variance method, using alternative statistical quantities like $\mathbb{E}[\|X\|_1]$ and $\mathbb{E}[\|X\|_3^3]$ can yield better bounds in certain dimensions. This innovation improves previous low-dimensional results and provides a framework for choosing suitable methods depending on the dimension. 
\end{abstract}

\section{Introduction}

The Erd\H{o}s distinct subset sums problem asks for the minimum value of the largest element in a sequence of positive integers such that all subset sums are distinct. Formally, given $A = \{a_1, \ldots, a_n\} \subset \mathbb{N}$ with $a_1 < a_2 < \ldots < a_n$, if all $2^n$ subset sums are distinct, what is the minimum possible value of $a_n$? This problem has been studied extensively over the years by many researchers \cite{aliev2008siegel, alon2016probabilistic, bae1996subset, elkies1986improved, erdos1941problems, guy1982sets,   steinerberger2023some,dubroff2021note}.

Erd\H{o}s and Moser \cite{erdos1941problems} established the foundational bound
\[
a_n \geq \frac{1}{4} \cdot \frac{2^n}{\sqrt{n}}
\]
using the variance method\footnote{This method computes the variance of a statistical variable generated by $a_1,a_2,\ldots,a_n$ from two perspectives to obtain a lower bound for $a_n$.}. This bound has been progressively improved, culminating in the current best result by Dubroff, Fox, and Xu \cite{dubroff2021note}:
\[
a_n \geq \sqrt{\frac{2}{\pi}} \cdot \frac{2^n}{\sqrt{n}}.
\]

\begin{definition}
    Let $2^{[n]}$ denote the family of all subsets of $\{1,\ldots,n\}$. Define $M$ to be the minimum integer such that there exists a sequence $\Sigma = (a_1,\ldots,a_n)$ in $\mathbb{Z}^k$ with each $a_i \in [0,M]^k$ (i.e. $\Sigma$ is $M$-bounded) and
    \[
    \sum_{i \in A_1} a_i \neq \sum_{i \in A_2} a_i
    \]
    for all distinct $A_1, A_2 \in 2^{[n]}$. 
    
    We call such a sequence a \emph{$M$-bounded distinct subset sum sequence} in $\mathbb{Z}^k$.
\end{definition}

This high-dimensional generalisation, where integers are replaced by vectors in $\mathbb{Z}^k$, has only received limited attention. Costa, Dalai, and Della Fiore \cite{costa2021variations} extended the variance method to obtain:
\[
M \geq (1+o(1)) \cdot \sqrt{\frac{4}{\pi n(k+2)}} \cdot \Gamma\left(\frac{k}{2}+1\right)^{1/k} \cdot 2^{n/k}
\]
where $M$ denotes the maximum component value in the vectors.

Our main result gives an improved lower bound for $M$ in several dimensions $k$. In particular, by considering more general statistical quantities beyond the classical variance, we obtain improved bounds for certain $k$. After a careful comparison between my result and Costa, Dalai, and Della Fiore's, the main theorem of this paper is as follows:
\begin{theorem}\label{mainthe}
    Let $\Sigma = (a_1,\ldots,a_n)$ be a \emph{$M$-bounded distinct subset sum sequence} in $\mathbb{Z}^k$. Then:
\[
    M \geq 
    \begin{cases}
        (1+o(1))\;\cdot\;\frac{1}{k+1}\;\cdot\;\sqrt{\frac{\pi}{2}}\;\cdot\;\frac{2^{n/k}}{\sqrt{n}}\;\cdot\;(k!)^{1/k} & k \leq 4 \\[1.5ex]
        (1+o(1))\;
        \cdot\;
        \frac{1}{\sqrt[3]{k+3}}\;
        \cdot\;
        \sqrt[6]{\frac{\pi}{8}}\;
        \cdot\;
        \frac{2^{n/k}}{\sqrt{n}}\;
        \cdot\;
        \frac{\Gamma\!\big(\tfrac{k+3}{3}\big)^{1/k}}{\Gamma\!\big(\tfrac{4}{3}\big)} & 4 < k \leq 6 \\[1.5ex]
        (1+o(1))\;\cdot\;\frac{1}{\sqrt{k+2}}\;\cdot\;\sqrt{\tfrac{4}{\pi}}\;\cdot\;\frac{2^{n/k}}{\sqrt{n}}\;\cdot\;\Gamma\!\big(\tfrac{k}{2}+1\big)^{1/k} & k > 6
    \end{cases}
\]
\end{theorem}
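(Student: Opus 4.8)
The plan is to handle all three lines of Theorem~\ref{mainthe} through a single master estimate, obtaining them by setting $p=1$, $p=3$, and $p=2$ respectively in the $\ell^p$-analogue of the variance argument. Let $\varepsilon_1,\dots,\varepsilon_n$ be i.i.d.\ uniform on $\{-\tfrac12,\tfrac12\}$ and put $X=\sum_{i=1}^n \varepsilon_i a_i\in\mathbb{R}^k$. Because $\Sigma$ has distinct subset sums, the $2^n$ realisations of $X$ are distinct points of the shifted lattice $\mathbb{Z}^k-\tfrac12\sum_i a_i$, each with probability $2^{-n}$. The whole argument then consists of estimating $\mathbb{E}\big[\|X\|_p^p\big]=\sum_{j=1}^k \mathbb{E}\big[|X_j|^p\big]$ from below (using distinctness) and from above (using boundedness), and comparing.

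For the lower bound I would use a packing/rearrangement argument. Since $t\mapsto t^p$ is increasing and the sublevel sets of $\|\cdot\|_p$ are dilates of the $\ell^p$-ball, the sum $\sum_{\text{points}}\|x\|_p^p$ over any family of $2^n$ distinct lattice points is minimised by taking the $2^n$ points of smallest $\ell^p$-norm, which asymptotically fill the ball $B_p(R)=\{\|x\|_p\le R\}$ of volume $2^n$. Solving $\mathrm{vol}\,B_p(R)=\frac{(2\Gamma(1+1/p))^k}{\Gamma(1+k/p)}R^k=2^n$ for $R$ and using the elementary moment $\frac{1}{\mathrm{vol}\,B_p(R)}\int_{B_p(R)}\|x\|_p^p\,dx=\frac{k}{k+p}R^p$ gives $\mathbb{E}\big[\|X\|_p^p\big]\ge(1+o(1))\,\frac{k}{k+p}R^p$. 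The only analytic input is the lattice-point count $\#(B_p(R)\cap\mathbb{Z}^k)=(1+o(1))\,\mathrm{vol}\,B_p(R)$ as $R\to\infty$, which also absorbs the harmless lattice shift.

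The upper bound is where I expect the real difficulty. Each coordinate $X_j=\sum_i\varepsilon_i a_{ij}$ has variance $\sigma_j^2=\tfrac14\sum_i a_{ij}^2\le \tfrac14 nM^2$, and one wants $\mathbb{E}\big[|X_j|^p\big]\le(1+o(1))\,\gamma_p\,\sigma_j^p$ with the sharp Gaussian constant $\gamma_p=\mathbb{E}|N(0,1)|^p=\frac{2^{p/2}}{\sqrt\pi}\Gamma\big(\tfrac{p+1}{2}\big)$. This inequality is \emph{false} coordinatewise in general (a single dominant summand forces $\mathbb{E}|X_j|^p$ up to $\sigma_j^p>\gamma_p\sigma_j^p$), so the crux is that the constraint $a_{ij}\in[0,M]$ excludes this. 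I would resolve it by convexity: the map $(a_{ij})_i\mapsto\mathbb{E}\big|\sum_i\varepsilon_i a_{ij}\big|^p$ is convex on the box $[0,M]^n$, hence maximised at a vertex, and is monotone in the number of nonzero entries, so the maximiser is $a_{ij}\equiv M$. For that extremal column $X_j=M\sum_i\varepsilon_i$ is a centred binomial, to which the central limit theorem (with uniform integrability coming from boundedness) applies, yielding $\mathbb{E}\big[|X_j|^p\big]\le(1+o(1))\,\gamma_p\big(\tfrac{\sqrt n\,M}{2}\big)^p$ for every admissible column. Summing over $j$ gives $\mathbb{E}\big[\|X\|_p^p\big]\le(1+o(1))\,k\,\gamma_p\big(\tfrac{\sqrt n\,M}{2}\big)^p$. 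Making the convexity reduction rigorous and controlling the uniformity of the CLT error is the main obstacle; everything else is bookkeeping.

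Combining the two estimates, the factor $k$ cancels and I obtain the master inequality
\[
M\ \ge\ (1+o(1))\,\Phi_p(k)\,\frac{2^{n/k}}{\sqrt n},\qquad
\Phi_p(k)=\frac{\Gamma(1+k/p)^{1/k}}{(k+p)^{1/p}\,\gamma_p^{1/p}\,\Gamma(1+1/p)} .
\]
Specialising and simplifying the constants — $\gamma_1=\sqrt{2/\pi}$ so $\gamma_1^{-1}=\sqrt{\pi/2}$; $\gamma_2=1$ with $\Gamma(\tfrac32)^{-1}=2/\sqrt\pi$; and $\gamma_3=2\sqrt{2/\pi}$ so $\gamma_3^{-1/3}=\sqrt[6]{\pi/8}$ — reproduces exactly the three displayed expressions for $p=1$, $p=2$ and $p=3$, with $\Gamma(1+k/3)=\Gamma\big(\tfrac{k+3}{3}\big)$ and $\Gamma(1+k)^{1/k}=(k!)^{1/k}$. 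Since the master inequality holds for every positive integer $k$ and every $p\in\{1,2,3\}$, each line of the theorem follows immediately upon substituting the corresponding $p$; the role of the case split is only to report, in each range of $k$, the exponent selected by the author's comparison, which is settled by a finite evaluation of the explicit constants $\Phi_p(k)$ together with a monotonicity argument for large $k$.
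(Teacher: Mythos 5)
Your proposal is correct, and its skeleton coincides with the paper's: the same signed sum $X=\sum_i\epsilon_i a_i$, the same lower bound obtained by packing the $2^n$ distinct realisations against the $\ell^p$-ball of volume $2^n$ (this is exactly Lemma~\ref{lem:pnorm} together with Proposition~\ref{thm:pnorm-volume}), and the same convexity-plus-extremality reduction of the upper bound to the all-$M$ column. Where you genuinely diverge is in scope and in how the extremal moment is evaluated. First, the paper never proves the $k>6$ line itself: it imports the variance bound from Costa--Dalai--Della Fiore \cite{costa2021variations}, whereas your $p=2$ specialisation of the master inequality re-derives it self-contained (and for $p=2$ no asymptotics are needed at all, since $\mathbb{E}\big[\|X\|_2^2\big]=\tfrac14\sum_{i,j}a_{ij}^2\le\tfrac14 knM^2$ is exact). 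Second, the paper treats $p=1$ and $p=3$ as two separate theorems (Theorems~\ref{thm:first} and~\ref{thm:third}), computing the extremal binomial moments in closed form --- $\tfrac{Mn_0}{2^{n_0}}\binom{n_0-1}{\lfloor (n_0-1)/2\rfloor}$ for the first moment, the factorial expressions for the third --- and then applying Stirling; you instead obtain the Gaussian constant $\gamma_p=\mathbb{E}|N(0,1)|^p$ uniformly in $p$ via the CLT with uniform integrability, and your conditional-Jensen argument for monotonicity in the number of nonzero entries works for every $p\ge 1$, where the paper only verifies it through its explicit formulas. Your unified constant $\Phi_p(k)$ is therefore strictly more general than what the paper proves --- it effectively supplies the formula asked for in the paper's open problem (1) --- at the cost of invoking moment convergence in the CLT in place of elementary binomial identities. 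Finally, you are right that no optimality comparison is needed for the theorem as stated: each of the three bounds holds for every $k$, so the three specialisations alone imply all three lines; the paper, too, handles the comparison only as a remark accompanied by a figure.
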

\section{Conversion of the Distinct Subset Sum}
To analyse the distinctness of subset sums, we introduce the random variable
\[
X = \sum_{i=1}^{n} \epsilon_i a_i,
\]
where each $\epsilon_i$ is an independent random variable that takes the value $-\frac{1}{2}$ or $\frac{1}{2}$ with equal probability. 

This construction is for any subset $A \subseteq [n]$, consider
\[
\sum_{i \in A} a_i - \frac{1}{2} \sum_{i=1}^n a_i = \sum_{i=1}^n \epsilon_i a_i,
\]
where each $\epsilon_i = \frac{1}{2}$ if $i \in A$ and $\epsilon_i = -\frac{1}{2}$ if $i \notin A$. Therefore, $X$ should also exhibit the property of being distinct for different subsets $A$.
\section{Statistical Bridge Methods}

\subsection{General Framework}

The core idea is to bound a statistical quantity (hereafter, we refer to such a quantity as a \emph{Statistical Bridge}) from above and below:
\begin{itemize}
\item \textbf{Upper bound:} Obtained when all vector components achieve maximum values
\item \textbf{Lower bound:} Derived from the $2^n$ points closest to the origin
\end{itemize}

\subsection{Volume Calculations in $p$-Normed Spaces}

\begin{definition}
Let $1 \leq p < \infty$ and $k \in \mathbb{N}$. The \emph{$p$-norm} of a vector $X = (x_1, x_2, \ldots, x_k) \in \mathbb{R}^k$ is defined by
\[
    \|X\|_p = \left( \sum_{i=1}^k |x_i|^p \right)^{1/p}.
\]
A \emph{$k$-dimensional $p$-normed space} is the metric space $(\mathbb{R}^k, d_p)$, where the distance between $X, Y \in \mathbb{R}^k$ is given by
\[
    d_p(X, Y) = \|X - Y\|_p.
\]
\end{definition}
   
   \begin{proposition}\label{thm:pnorm-volume}
   The volume $V_{k,p}(R)$ of a $k$-dimensional ball with radius $R$ in the $p$-norm, that is,
   \[
   B_{k,p}(R) = \left\{ X \in \mathbb{R}^k : \|X\|_p \leq R \right\},
   \]
   is given by
   \[
   V_{k,p}(R) = \frac{\left[2\Gamma\left(1+\frac{1}{p}\right)\right]^k}{\Gamma\left(1+\frac{k}{p}\right)} \cdot R^k,
   \]
   where $\Gamma$ denotes the Gamma function.
   \end{proposition}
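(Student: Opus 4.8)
The plan is to compute the volume of the $p$-norm ball $B_{k,p}(R)$ directly, reducing immediately to the unit ball case $R=1$ by the scaling identity $V_{k,p}(R) = R^k V_{k,p}(1)$, which holds because the map $X \mapsto RX$ multiplies $k$-dimensional Lebesgue measure by $R^k$ and carries $B_{k,p}(1)$ onto $B_{k,p}(R)$. It therefore suffices to show $V_{k,p}(1) = [2\Gamma(1+\tfrac1p)]^k / \Gamma(1+\tfrac kp)$.

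\medskip

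My preferred route is a Gamma-function computation via the Gaussian-type integral trick. First I would evaluate the integral
\[
I = \int_{\mathbb{R}^k} e^{-\|X\|_p^p}\, dX = \int_{\mathbb{R}^k} \exp\!\Big(-\sum_{i=1}^k |x_i|^p\Big)\, dX
\]
in two different ways. On one hand, the integrand factorises as a product over coordinates, so $I = \big(\int_{-\infty}^{\infty} e^{-|t|^p}\, dt\big)^k$, and the one-dimensional integral is a standard Gamma evaluation: substituting $u = t^p$ on $(0,\infty)$ gives $\int_{-\infty}^\infty e^{-|t|^p}\,dt = 2\int_0^\infty e^{-u} \tfrac1p u^{1/p-1}\, du = \tfrac2p\,\Gamma(\tfrac1p) = 2\,\Gamma(1+\tfrac1p)$. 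Hence $I = [2\Gamma(1+\tfrac1p)]^k$. On the other hand, I would rewrite $I$ as a radial integral using the ``onion-peeling'' coarea decomposition: since $e^{-\|X\|_p^p}$ depends only on $r = \|X\|_p$, and the volume of $B_{k,p}(r)$ is $V_{k,p}(1)\,r^k$ so that the surface measure of the sphere of radius $r$ is $\tfrac{d}{dr}\big(V_{k,p}(1)\,r^k\big) = k\,V_{k,p}(1)\,r^{k-1}$, we get
\[
I = \int_0^\infty e^{-r^p}\, k\, V_{k,p}(1)\, r^{k-1}\, dr = k\,V_{k,p}(1)\int_0^\infty e^{-r^p} r^{k-1}\, dr.
\]
The remaining integral is again Gamma: with $u = r^p$ it equals $\tfrac1p\,\Gamma(\tfrac kp)$, so $I = k\,V_{k,p}(1)\cdot \tfrac1p\Gamma(\tfrac kp) = V_{k,p}(1)\,\Gamma(1+\tfrac kp)$, using $\tfrac kp\,\Gamma(\tfrac kp) = \Gamma(1+\tfrac kp)$. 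Equating the two expressions for $I$ yields $V_{k,p}(1) = [2\Gamma(1+\tfrac1p)]^k/\Gamma(1+\tfrac kp)$, and multiplying by $R^k$ completes the proof.

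\medskip

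The one genuinely non-routine point, which I would want to state carefully rather than wave past, is the justification that the surface measure of the $p$-sphere is exactly the radial derivative of the ball volume. This is really the claim that the measure of the shell $\{r \le \|X\|_p \le r+dr\}$ is $V_{k,p}(1)\big((r+dr)^k - r^k\big) = k\,V_{k,p}(1)\,r^{k-1}\,dr + o(dr)$, which follows rigorously from the scaling homogeneity $V_{k,p}(r) = V_{k,p}(1)\,r^k$ together with the coarea formula (or, more elementarily, by a change of variables to ``polar-type'' coordinates adapted to the $p$-norm). An alternative that sidesteps the surface-measure language entirely is to differentiate the identity $\int_{B_{k,p}(R)} e^{-\|X\|_p^p}\,dX$ — but the cleanest self-contained argument simply substitutes $X = r\,\omega$ with $r = \|X\|_p$ and integrates in $r$ against the density $k\,V_{k,p}(1)\,r^{k-1}$, whose validity is exactly the homogeneity statement above. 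I expect this bookkeeping about the radial measure, not either Gamma integral, to be the main obstacle to a fully rigorous write-up.
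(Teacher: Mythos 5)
Your proof is correct, but there is nothing in the paper to compare it against: the paper states Proposition~\ref{thm:pnorm-volume} with no proof at all, treating it as a known classical fact (it is Dirichlet's formula for the volume of the $\ell_p$-ball). So your write-up fills a genuine gap rather than paralleling an existing argument. The route you chose --- evaluating $\int_{\mathbb{R}^k} e^{-\|X\|_p^p}\,dX$ once by factorisation into $k$ one-dimensional Gamma integrals, giving $[2\Gamma(1+\tfrac1p)]^k$, and once radially, giving $V_{k,p}(1)\,\Gamma(1+\tfrac kp)$ --- is the standard proof, and both Gamma evaluations are carried out correctly. You are also right to flag the radial step as the only delicate point, and your resolution is sound: what the argument needs is not the $(k-1)$-dimensional Hausdorff measure of the $p$-sphere (which for $p\neq 2$ is \emph{not} equal to $\tfrac{d}{dr}V_{k,p}(r)$, since $|\nabla\|x\|_p|\not\equiv 1$ there), but only the pushforward of Lebesgue measure under $X\mapsto\|X\|_p$, whose distribution function is $V_{k,p}(1)\,r^k$ by homogeneity and whose density is therefore $k\,V_{k,p}(1)\,r^{k-1}$. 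Your phrasing ``surface measure of the sphere'' is loose on exactly this point, but it matches the paper's own convention (Proposition~\ref{thm:pnorm-surface} \emph{defines} the surface area as $\tfrac{d}{dR}V_{k,p}(R)$), and your final paragraph makes clear you mean the shell-volume derivative, so the argument is rigorous as stated. One stylistic remark: since the paper's Lemma~\ref{lem:pnorm} uses precisely this ``integrate against $k\,V\,r^{k-1}\,dr$'' device without justification, your careful treatment of the radial density would also serve to shore up that proof.
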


   \begin{proposition}\label{thm:pnorm-surface}
   The surface area $S_{k,p}(R)$ of a $k$-dimensional $p$-norm sphere of radius $R$, i.e. the boundary of $B_{k,p}(R)$, is given by
   \[
   S_{k,p}(R) = \frac{d}{dR} V_{k,p}(R) = \frac{k \left[2\Gamma\left(1+\frac{1}{p}\right)\right]^k}{\Gamma\left(1+\frac{k}{p}\right)} \cdot R^{k-1}.
   \]
   \end{proposition}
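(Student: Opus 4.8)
The plan is to read the identity $S_{k,p}(R)=\frac{d}{dR}V_{k,p}(R)$ as the defining property of the $p$-norm surface area and to reduce the proposition to a single differentiation. By Proposition~\ref{thm:pnorm-volume} the volume is a pure monomial in the radius, $V_{k,p}(R)=C_{k,p}\,R^k$, where the constant $C_{k,p}=\left[2\Gamma\!\left(1+\tfrac1p\right)\right]^k\big/\Gamma\!\left(1+\tfrac{k}{p}\right)$ does not depend on $R$. Differentiating term by term gives $\frac{d}{dR}V_{k,p}(R)=k\,C_{k,p}\,R^{k-1}$, which is exactly the claimed closed form; so the only genuine content is to justify that this derivative is the right notion of surface area.

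To supply that justification I would run an onion-peeling argument. Every $p$-sphere of radius $r$ is the dilation $r\cdot\partial B_{k,p}(1)$ of the unit $p$-sphere, so the ball $B_{k,p}(R)$ decomposes into the concentric family of such spheres as $r$ ranges over $[0,R]$. Writing $S_{k,p}(r)$ for the surface content of the infinitesimal shell between radii $r$ and $r+dr$ and integrating, one obtains $V_{k,p}(R)=\int_0^R S_{k,p}(r)\,dr$; the fundamental theorem of calculus then returns $S_{k,p}(R)=\frac{d}{dR}V_{k,p}(R)$. Equivalently I would identify $S_{k,p}(R)$ with the $p$-norm Minkowski content $\lim_{\epsilon\to 0}\big[V_{k,p}(R+\epsilon)-V_{k,p}(R)\big]/\epsilon$, namely the volume of the thin shell $\{R\le\|X\|_p\le R+\epsilon\}$ divided by its $p$-thickness, and this limit is the same derivative.

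The step I expect to be the main obstacle is conceptual rather than computational: pinning down exactly which surface measure is meant. For $p\neq 2$ the quantity $\frac{d}{dR}V_{k,p}(R)$ is \emph{not} the $(k-1)$-dimensional Euclidean Hausdorff measure of the boundary $\partial B_{k,p}(R)$, because the outward normal of a $p$-sphere is not radial; instead it is the surface content measured by the $p$-norm's own radial dilation, since the shell thickness above is gauged in $\|\cdot\|_p$ rather than in Euclidean distance. I would therefore state explicitly that $S_{k,p}$ denotes this dilation-based (Minkowski-content) surface area, remark that it coincides with the Hausdorff measure only when $p=2$, and observe that it is precisely this volume-derivative quantity that feeds the later statistical-bridge counting, so that no inconsistency arises. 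With that convention fixed, the differentiation of the first paragraph completes the proof.
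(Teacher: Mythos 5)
Your proposal is correct, and it actually supplies more than the paper does: the paper states this proposition with no proof at all, and since the identity $S_{k,p}(R)=\frac{d}{dR}V_{k,p}(R)$ is written directly into the statement, the entire intended content is the one-line differentiation of the monomial $V_{k,p}(R)=C_{k,p}R^{k}$ from Proposition~\ref{thm:pnorm-volume} --- exactly your first paragraph. What you add, and what the paper leaves implicit, is the identification of \emph{which} surface measure this derivative computes. Your warning is well taken: for $p\neq 2$ the volume derivative is the Minkowski content with shell thickness gauged in $\|\cdot\|_p$, not the Euclidean $(k-1)$-dimensional Hausdorff measure of $\partial B_{k,p}(R)$; for instance, with $k=2$, $p=1$ the formula gives $S_{2,1}(R)=4R$ while the Euclidean perimeter of that square is $4\sqrt{2}\,R$. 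Your closing observation is also the right consistency check: the only place the proposition is used is the proof of Lemma~\ref{lem:pnorm}, where $\int_{\|s'\|_p\le 1}\|s'\|_p^p\,ds'$ is evaluated by slicing into level sets $\{\|x\|_p=\rho\}$ with weight $S_{k,p}(\rho)\,d\rho$; that layer-cake decomposition is valid precisely for the volume-derivative (Minkowski-content) notion of surface area, so the convention you fix is the one the paper's later argument actually requires, and no inconsistency arises.
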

   
   \begin{lemma}\label{lem:pnorm}
   In a $p$-normed space, let $A$ denote the set of $2^n$ integer lattice points (i.e., points in $\mathbb{Z}^k$) that are closest to the origin. Let $R$ be the radius of the smallest ball (centred at the origin) that contains all points in $A$. Then the following identity holds:
   \[
   \sum_{s\in A}{\|s\|_p^p}=2^n\cdot \frac{k}{k+p}\cdot R^p
   \]
   \end{lemma}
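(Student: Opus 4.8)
The plan is to treat the stated identity asymptotically (as $n\to\infty$, equivalently $R\to\infty$) and to replace the discrete sum over lattice points by a volume integral, which can then be evaluated in closed form using the surface-area formula of Proposition \ref{thm:pnorm-surface}. The guiding principle is that $\mathbb{Z}^k$ has covolume $1$, so when the region is large each lattice point effectively carries one unit of volume, and both the count of points in $A$ and the weighted sum $\sum_{s}\|s\|_p^p$ are well approximated by the corresponding integrals over $B_{k,p}(R)$.

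First I would pin down the relationship between $2^n$ and $R$. Since $A$ consists of the $2^n$ lattice points closest to the origin and $R$ is the radius of the smallest $p$-ball containing them, one has $B_{k,p}(R)^\circ\cap\mathbb{Z}^k \subseteq A \subseteq B_{k,p}(R)\cap\mathbb{Z}^k$, so $A$ differs from $B_{k,p}(R)\cap\mathbb{Z}^k$ only by lattice points lying within bounded distance of the sphere $\|x\|_p=R$. Standard lattice-point counting then gives
\[
2^n = |A| = V_{k,p}(R) + O\!\left(R^{k-1}\right) = (1+o(1))\,V_{k,p}(R),
\]
with the leading term supplied by Proposition \ref{thm:pnorm-volume}.

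Next I would reduce the sum to an integral. Associating to each $s\in A$ the unit cube $s+[-\tfrac12,\tfrac12)^k$, the integrand $x\mapsto\|x\|_p^p$ varies by only $O(R^{p-1})$ across any such cube, since its gradient has magnitude $O(R^{p-1})$ on $B_{k,p}(R)$. Hence $\sum_{s\in A}\|s\|_p^p$ equals $\int_{B_{k,p}(R)} \|x\|_p^p\,dx$ up to a controlled error. The integral is evaluated by the shell method: on the sphere of radius $r$ the integrand equals $r^p$, so by Proposition \ref{thm:pnorm-surface},
\[
\int_{B_{k,p}(R)} \|x\|_p^p\,dx = \int_0^R r^p\, S_{k,p}(r)\,dr = \frac{k\big[2\Gamma(1+\tfrac1p)\big]^k}{\Gamma(1+\tfrac{k}{p})}\int_0^R r^{k+p-1}\,dr = \frac{k}{k+p}\,V_{k,p}(R)\,R^p,
\]
where the last equality reuses the volume formula of Proposition \ref{thm:pnorm-volume}. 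Substituting $V_{k,p}(R)=(1+o(1))\,2^n$ yields the claimed identity.

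The main obstacle is controlling the two error terms and confirming that both are of lower order than the main term $\Theta(R^{k+p})$. The boundary discrepancy between $A$ and $B_{k,p}(R)\cap\mathbb{Z}^k$ involves $O(R^{k-1})$ points, each contributing $O(R^p)$, hence a total of $O(R^{k+p-1})$; and the cube-by-cube Riemann error is at most $O(R^{p-1})$ per cube over $O(R^k)$ cubes, again $O(R^{k+p-1})$. Both are $o(R^{k+p})$, so they are absorbed into a $(1+o(1))$ factor, which is precisely what legitimises the stated identity in the asymptotic regime in which it is applied.
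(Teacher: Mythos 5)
Your proposal follows essentially the same route as the paper's own proof: approximate the lattice sum by the integral of $\|x\|_p^p$ over $B_{k,p}(R)$, evaluate that integral by the shell method using Proposition~\ref{thm:pnorm-surface}, and eliminate $R$ via the volume formula of Proposition~\ref{thm:pnorm-volume} relating $2^n$ to $V_{k,p}(R)$. Your version is in fact slightly more careful than the paper's, which writes these lattice-to-volume approximations as exact equalities; you make the asymptotic interpretation explicit and check that the boundary and Riemann-sum errors are $O(R^{k+p-1})=o(R^{k+p})$, which is what the lemma (stated as an identity but applied with a $(1+o(1))$ factor) actually requires.
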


   \begin{proof}
   Using Proposition~\ref{thm:pnorm-volume}, we have:
   \[
   2^n = \frac{\left[2\Gamma\left(1+\frac{1}{p}\right)\right]^k}{\Gamma\left(1+\frac{k}{p}\right)} \cdot R^{k}
   \]
   Then use the Proposition~\ref{thm:pnorm-surface}, we have:
   \begin{align*}
   \sum_{s\in A}{\|s\|_p^p} &= R^p\cdot \sum_{s'\in A'}{\|s'\|_p^p} \\
   &= R^{p+k}\cdot\int_{\|s'\|\le1}\|s'\|_p^p\,ds' \\
   &= R^{p+k}\cdot \int_{0}^{1}\rho^p\cdot k\cdot\frac{\left[2\Gamma\left(\frac{p+1}{p}\right)\right]^k}{\Gamma\left( \frac{k+p}{p}\right)}\cdot\rho^{k-1}d\rho \\
   &= \frac{k}{p+k}\cdot R^{p+k}\cdot \frac{\left[2\Gamma\left(\frac{p+1}{p}\right)\right]^k}{\Gamma\left( \frac{k+p}{p}\right)} \\
   &= 2^n\cdot \frac{k}{p+k}\cdot R^p \qedhere
   \end{align*}
   \end{proof}

\section{Main Results}

\subsection{First Moment Method}
Using $\mathbb{E}[\|X\|_1]$ as the statistical bridge, we obtain the following theorem:
\begin{theorem}\label{thm:first}
We have:
\[
M \;\ge\; (1+o(1))\;\cdot\;\frac{1}{k+1}\;\cdot\;\sqrt{\frac{\pi}{2}}\;\cdot\;\frac{2^{\,\tfrac{n}{k}}}{\sqrt{n}}\;\cdot\;(k!)^{1/k}.
\]
\end{theorem}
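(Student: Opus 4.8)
The plan is to estimate the statistical bridge $\mathbb{E}[\|X\|_1]$ from above and below and then combine the two estimates. Writing $X=(X_1,\dots,X_k)$ with $X_j=\sum_{i=1}^n \epsilon_i a_i^{(j)}$ the $j$-th coordinate of $X$, linearity of expectation gives $\mathbb{E}[\|X\|_1]=\sum_{j=1}^k \mathbb{E}[|X_j|]$, so the whole problem reduces to controlling the first absolute moment of each coordinate sum.

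For the upper bound I would observe that each $X_j$ is a sum of $n$ independent, mean-zero, bounded terms $\epsilon_i a_i^{(j)}$ with total variance $\sigma_j^2=\tfrac14\sum_{i=1}^n (a_i^{(j)})^2 \le \tfrac{nM^2}{4}$. After normalising, $X_j/\sigma_j$ converges in distribution to a standard Gaussian $Z$, and since the normalised variables are bounded in $L^2$ they are uniformly integrable, whence $\mathbb{E}[|X_j|]=(1+o(1))\,\sigma_j\,\mathbb{E}[|Z|]=(1+o(1))\sqrt{\tfrac{2}{\pi}}\,\sigma_j$ for the coordinates whose variance grows. The coordinates with bounded $\sigma_j$ contribute only $O(k)$ in total, which is negligible against $kM\sqrt n$, so using $\sigma_j\le \tfrac{M\sqrt n}{2}$ and summing over $j$ yields $\mathbb{E}[\|X\|_1]\le (1+o(1))\,\sqrt{\tfrac{2}{\pi}}\cdot k\cdot\tfrac{M\sqrt n}{2}$.

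For the lower bound I would invoke the distinctness hypothesis: the $2^n$ realisations of $X$ are distinct points, all lying in a single translate of the integer lattice $\mathbb{Z}^k$. Hence $\sum_{\text{configs}}\|X\|_1$ is at least the minimal total $\ell_1$-length of any $2^n$ distinct lattice points, which, up to a $(1+o(1))$ factor from the lattice-to-volume approximation (unaffected by the translation), equals the sum over the $2^n$ points nearest the origin. Applying Lemma~\ref{lem:pnorm} with $p=1$, together with the normalisation $V_{k,1}(R)=\tfrac{2^k}{k!}R^k=2^n$ which forces $R=2^{n/k}(k!)^{1/k}/2$, gives $\mathbb{E}[\|X\|_1]=\tfrac{1}{2^n}\sum_{\text{configs}}\|X\|_1\ge (1+o(1))\,\tfrac{k}{k+1}\cdot\tfrac{(k!)^{1/k}}{2}\,2^{n/k}$.

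Chaining the lower bound beneath the upper bound and solving for $M$ then collapses the two factors of $k$ to $\tfrac{1}{k+1}$ and turns $\sqrt{2/\pi}$ into its reciprocal $\sqrt{\pi/2}$, producing exactly the claimed inequality. The main obstacle I anticipate is making the upper bound fully rigorous: the factor $\sqrt{2/\pi}$ is a genuine Gaussian constant, so the naive Jensen estimate $\mathbb{E}[|X_j|]\le\sigma_j$ is too weak and one must control the convergence $\mathbb{E}[|X_j|/\sigma_j]\to\mathbb{E}[|Z|]$ quantitatively (e.g.\ via a Berry--Esseen / uniform-integrability argument) and uniformly enough across the coordinates, while separately checking that coordinates lacking a central-limit regime contribute only lower-order terms. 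A secondary point is verifying that the lattice translate appearing in the lower bound does not perturb the leading-order count of nearest points.
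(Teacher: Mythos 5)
Your overall architecture---upper-bounding $\mathbb{E}[\|X\|_1]$ coordinate by coordinate, lower-bounding it via Lemma~\ref{lem:pnorm} with $p=1$, and solving for $M$---is the same as the paper's, and your lower bound and final algebra are fine. The gap is in the upper bound. You assert that whenever $\sigma_j \to \infty$ the normalised coordinate $X_j/\sigma_j$ converges to a standard Gaussian, hence $\mathbb{E}[|X_j|] = (1+o(1))\sqrt{2/\pi}\,\sigma_j$. This is false: the summands $\epsilon_i a_i^{(j)}$ form a triangular array whose individual entries may be as large as $M \sim 2^{n/k}$, and no Lindeberg/Feller condition is available, because the coefficients are whatever the distinct-subset-sum sequence forces them to be. Worse, the sequences natural to this problem are exactly the ones where the CLT fails: in dimension $k=1$, for $a_i = 2^{i-1}$ the variable $X$ is uniform on $2^n$ equally spaced points, so $\mathbb{E}[|X|]/\sigma \to \sqrt{3}/2 \approx 0.866$, strictly larger than $\sqrt{2/\pi} \approx 0.798$. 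So your intermediate claim $\mathbb{E}[|X_j|] \le (1+o(1))\sqrt{2/\pi}\,\sigma_j$ is not merely unproven, it is wrong, and the Berry--Esseen/uniform-integrability repair you anticipate cannot save it, since the hypothesis those theorems require (asymptotic negligibility of individual summands) genuinely fails here.

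The bound you actually need, $\mathbb{E}[|X_j|] \le (1+o(1))\,\sqrt{2/\pi}\cdot M\sqrt{n}/2$, is nevertheless true, but it must be proved as a worst-case statement over all coefficient vectors in $[0,M]^n$ rather than through the variance of the given sequence: lopsided configurations have ratio $\mathbb{E}[|X_j|]/\sigma_j$ exceeding $\sqrt{2/\pi}$ but compensatingly small $\sigma_j$, so the product has to be controlled directly. The paper does this by noting that $\mathbb{E}\left[\left|\sum_i \epsilon_i x_i\right|\right]$ is convex in each $x_i$, so its maximum over $[0,M]^n$ occurs at a vertex of the cube with, say, $n_0$ coordinates equal to $M$; the resulting value $\frac{M}{2^{n_0}}\, n_0 \binom{n_0-1}{\lfloor (n_0-1)/2\rfloor}$ is increasing in $n_0$, so the all-$M$ configuration is extremal, and for that single configuration the exact binomial formula (equivalently de Moivre--Laplace) legitimately produces the constant $\sqrt{2/\pi}$. (Alternatively, the identity $\tfrac12\left(|S+t|+|S-t|\right) = \max(|S|,t)$ for $t \ge 0$ shows the expectation is monotone in each $x_i$, giving the same reduction even more directly.) With such a reduction inserted in place of your CLT step, the rest of your argument goes through as written.
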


\begin{proof}
Since the $k$ components of the vector $X$ are independent of each other, we have:
\[
\mathbb{E}[\|X\|_1] \leq k \cdot \max \mathbb{E}\left[\left|\sum_{i=1}^n \epsilon_i x_i\right|\right],
\]
where each $x_i$ is between $0$ and $M$.

The function $\mathbb{E}\left[\left|\sum_{i=1}^n \epsilon_i x_i\right|\right]$ is convex with respect to each variable $x_i$. Therefore, its maximum is achieved when each $x_i$ takes either the value $0$ or $M$. Let $n_0$ denote the number of $x_i$ equal to $M$, and the remaining $x_i$ are $0$. In this case, we have:
\begin{align*}
    \mathbb{E}\left[\left|\sum_{i=1}^n \epsilon_i x_i\right|\right] &= \frac{1}{2^n} \sum_{\epsilon_i} \left|\sum_{i=1}^n \epsilon_i x_i\right| \\
    &\leq \frac{1}{2^n} \cdot 2^{n-n_0} \cdot M \cdot \sum_{\epsilon_i} \left|\sum_{i=1}^{n_0} \epsilon_i\right| \\
    &= \frac{M}{2^{n_0}} \sum_{i=0}^{n_0} \binom{n_0}{i} \left| \frac{n_0}{2} - i \right| \\
    &= \frac{M}{2^{n_0}} \cdot n_0 \cdot \binom{n_0-1}{\left\lfloor \frac{n_0-1}{2} \right\rfloor} \\
    &\leq \frac{M}{2^n} \cdot n \cdot \binom{n-1}{\left\lfloor \frac{n-1}{2} \right\rfloor}
\end{align*}

Therefore,
\begin{equation}
\mathbb{E}[|X|]\le \frac{kMn}{2^n}\cdot \binom{n-1}{\lfloor\frac{n-1}{2}\rfloor}
\end{equation}

On the other hand, Lemma~\ref{lem:pnorm} (with $p=1$) gives a lower bound:
\begin{equation}
\mathbb{E}[|X|] \geq (1+o(1)) \frac{k}{k+1} R,
\end{equation}
where (using Proposition~\ref{thm:pnorm-volume})
\[
R=2^{n/k}\cdot \frac{\Gamma \left(\frac{k+1}{k}\right)^{1/k}}{2\Gamma (2)}=2^{n/k-1}\cdot \Gamma \left(\frac{k+1}{1}\right)^{1/k}=2^{n/k-1}\cdot \left(k!\right)^{1/k}
\]
Comparing these two bounds and solving for $M$ gives the result in the theorem~\ref{thm:first}.
\end{proof}

\subsection{Third Moment Method}
Using $\mathbb{E}[\|X\|_3^3]$ as the statistical bridge, we obtain the following theorem:
\begin{theorem}\label{thm:third}
We have:
\[
M \;\ge\; (1+o(1))\;
\cdot\;\frac{1}{\sqrt[3]{k+3}}
\;\cdot\;\sqrt[6]{\frac{\pi}{8}}
\;\cdot\;\frac{2^{\,\tfrac{n}{k}}}{\sqrt{n}}
\;\cdot\;\frac{\Gamma\!\big(\tfrac{k+3}{3}\big)^{1/k}}{\Gamma\!\big(\tfrac{4}{3}\big)}.
\]
\end{theorem}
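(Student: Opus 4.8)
The plan is to mirror the structure of the First Moment Method proof (Theorem~\ref{thm:first}) exactly, but using $\mathbb{E}[\|X\|_3^3]$ as the statistical bridge instead of $\mathbb{E}[\|X\|_1]$. First I would establish the upper bound. Since $\|X\|_3^3 = \sum_{j=1}^k |X_j|^3$ where $X_j = \sum_{i=1}^n \epsilon_i x_i^{(j)}$ is the $j$-th component, linearity of expectation gives $\mathbb{E}[\|X\|_3^3] = \sum_{j=1}^k \mathbb{E}[|X_j|^3] \le k \cdot \max \mathbb{E}\big[\big|\sum_{i=1}^n \epsilon_i x_i\big|^3\big]$ where each $x_i \in [0,M]$. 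I would argue (as before) that $\mathbb{E}\big[\big|\sum \epsilon_i x_i\big|^3\big]$ is convex in each $x_i$, so the maximum is attained at the vertices $x_i \in \{0,M\}$; with $n_0$ coordinates equal to $M$ this reduces to estimating the third absolute central moment of a binomial, namely $\frac{M^3}{2^{n_0}} \sum_{i=0}^{n_0} \binom{n_0}{i} \big|\tfrac{n_0}{2} - i\big|^3$. By the asymptotic normality of the (scaled) binomial, the standardised third absolute moment converges to that of a standard Gaussian, so this quantity is $(1+o(1)) \cdot M^3 \cdot \mathbb{E}[|Z|^3] \cdot (n/4)^{3/2}$ where $Z \sim \mathcal{N}(0,1)$ and $\mathbb{E}[|Z|^3] = 2\sqrt{2/\pi}$. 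This yields an upper bound of the form $\mathbb{E}[\|X\|_3^3] \le (1+o(1)) \cdot k M^3 \cdot 2\sqrt{2/\pi}\cdot(n/4)^{3/2}$.

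Next I would apply Lemma~\ref{lem:pnorm} with $p=3$ to obtain the lower bound. The lemma gives $\sum_{s \in A} \|s\|_3^3 = 2^n \cdot \frac{k}{k+3} \cdot R^3$, and since the $2^n$ lattice points closest to the origin are precisely the support of $X$ (up to the standard asymptotic equidistribution argument used in Theorem~\ref{thm:first}), we get $\mathbb{E}[\|X\|_3^3] \ge (1+o(1)) \cdot \frac{k}{k+3} \cdot R^3$. Here $R$ is computed from Proposition~\ref{thm:pnorm-volume} with $p=3$: setting $2^n = V_{k,3}(R)$ gives
\[
R = 2^{n/k} \cdot \frac{\Gamma\!\big(1+\tfrac{k}{3}\big)^{1/k}}{2\,\Gamma\!\big(1+\tfrac13\big)} = 2^{n/k} \cdot \frac{\Gamma\!\big(\tfrac{k+3}{3}\big)^{1/k}}{2\,\Gamma\!\big(\tfrac43\big)}.
\]

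Finally I would combine the two bounds. Chaining the lower and upper estimates for $\mathbb{E}[\|X\|_3^3]$ gives
\[
(1+o(1)) \cdot \frac{k}{k+3} \cdot R^3 \;\le\; (1+o(1)) \cdot k M^3 \cdot 2\sqrt{\tfrac{2}{\pi}} \cdot \Big(\tfrac{n}{4}\Big)^{3/2},
\]
and solving for $M$ by taking cube roots isolates $M \ge (1+o(1)) \cdot \frac{1}{\sqrt[3]{k+3}} \cdot \big(2\sqrt{2/\pi}\big)^{-1/3} \cdot (n/4)^{-1/2} \cdot R$. Substituting the expression for $R$ and simplifying the constant $\big(2\sqrt{2/\pi}\big)^{-1/3} \cdot (1/4)^{-1/2} \cdot (1/2)$ down to $\sqrt[6]{\pi/8}$ should produce the claimed bound with the factor $\Gamma\!\big(\tfrac{k+3}{3}\big)^{1/k} / \Gamma\!\big(\tfrac43\big)$ and the $2^{n/k}/\sqrt{n}$ scaling.

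The main obstacle I anticipate is the upper-bound step, specifically justifying that the standardised third absolute central moment of the binomial converges to $\mathbb{E}[|Z|^3]$ with the claimed $(1+o(1))$ uniformity, and that the maximising configuration indeed has $n_0 = n$ (i.e. that taking all coordinates equal to $M$ is optimal, paralleling the $n_0 \le n$ monotonicity used in the first-moment proof). Unlike the first moment, where the relevant binomial sum has a clean closed form via $\binom{n-1}{\lfloor (n-1)/2\rfloor}$, the third moment has no elementary closed form, so I must rely on a local central limit theorem or a direct moment computation with explicit error control to extract the Gaussian constant $2\sqrt{2/\pi}$. Verifying that this asymptotic analysis is compatible with the convexity/vertex-maximisation argument (which is an exact inequality at each finite $n$) is where care is needed to keep the $(1+o(1))$ factor honest.
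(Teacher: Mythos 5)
Your proposal is correct and follows the paper's architecture almost exactly: the same statistical bridge $\mathbb{E}[\|X\|_3^3]$, the same componentwise reduction and convexity/vertex argument for the upper bound, the same application of Lemma~\ref{lem:pnorm} with $p=3$ and Proposition~\ref{thm:pnorm-volume} to get $R = 2^{n/k-1}\,\Gamma\!\big(\tfrac{k+3}{3}\big)^{1/k}/\Gamma\!\big(\tfrac{4}{3}\big)$, and the same final algebra yielding $\sqrt[6]{\pi/8}$. The one genuine difference is how the binomial third absolute central moment is evaluated: the paper states an exact closed-form identity, $\sum_{i}\binom{n}{i}\big|\tfrac{n}{2}-i\big|^3 = \frac{n!}{\left(\left(\frac{n}{2}-1\right)!\right)^2}$ for even $n$ (with an analogous odd-$n$ formula), and then implicitly applies Stirling, whereas you extract the asymptotic constant $\mathbb{E}[|Z|^3]=2\sqrt{2/\pi}$ directly via the central limit theorem. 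These agree: $\frac{1}{2^n}\cdot\frac{n!}{\left(\left(\frac{n}{2}-1\right)!\right)^2} = \frac{n^2}{4\cdot 2^n}\binom{n}{n/2} = (1+o(1))\cdot 2\sqrt{2/\pi}\,(n/4)^{3/2}$, so both routes give the same bound. Your route avoids having to prove the combinatorial identity (which the paper asserts without proof), at the cost of justifying moment convergence; this is not hard to make rigorous (convergence in distribution plus uniform integrability of $|S_n/\sigma_n|^3$, e.g.\ from bounded higher moments, suffices — no local CLT is needed). Finally, the obstacle you flag about showing $n_0=n$ is maximising is real but easily dispatched: for a mean-zero independent $\epsilon$, Jensen's inequality conditional on $S$ gives $\mathbb{E}[|S+\epsilon M|^3]\ge\mathbb{E}[|S|^3]$, so the moment is monotone in $n_0$; the paper simply asserts this step, so your attention to it is a point in your favour rather than a gap.
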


\begin{proof}
Analogous to Theorem~\ref{thm:first}, the upper bound of $\mathbb{E}[\|X\|_3^3]$ is attained when all components of every variable achieve their maximum values. Consequently,
\begin{align*}
    \mathbb{E}[\|X\|_3^3] &\leq k \cdot \max \mathbb{E}\left[\left|\sum_{i=1}^n \epsilon_i x_i\right|^3\right] \\
    &\leq \frac{k}{2^n} \cdot M^3 \cdot \sum_{i=0}^n \binom{n}{i} \left|\frac{n}{2} - i\right|^3 \\
    &= 
    \left\{
    \begin{aligned}
        &\frac{kM^3}{2^n}\cdot \frac{n!}{\left((\frac{n}{2}-1)!\right)^2}, && n\ \text{even} \\[1.2ex]
        &\frac{kM^3}{2^n}\cdot \frac{n!\cdot(2n-1)}{4\left((\frac{n-1}{2})!\right)^2}, && n\ \text{odd}
    \end{aligned}
    \right.\\
    &\leq \frac{kM^3}{2^n}\cdot \frac{n!}{\left((\frac{n}{2}-1)!\right)^2}
\end{align*}

On the other hand, Lemma~\ref{lem:pnorm} (with $p=3$) gives a lower bound:
\begin{equation}
\mathbb{E}[\|X\|_3^3] \geq (1+o(1)) \cdot \frac{k}{k+3} \cdot R^3,
\end{equation}
where (using Proposition~\ref{thm:pnorm-volume})
\[
R = 2^{n/k-1} \cdot \frac{\Gamma\left(\frac{k+3}{3}\right)^{1/k}}{\Gamma\left(\frac{4}{3}\right)}.
\]

Comparing these two bounds and solving for $M$ gives the result in the theorem~\ref{thm:third}.
\end{proof}

\subsection{Dimensional Optimality}

In summary, by combining Theorems~\ref{thm:first} ,~\ref{thm:third} and Simone Costa, Marco Dalai, and Stefano Della Fiore's work~\cite{costa2021variations}, we see that the best lower bound for $M$ depends on the dimension $k$.

\begin{remark}[Theorem~\ref{mainthe}]
The optimal statistical method for bounding $M$ depends on the dimension $k$:
\[
M \geq \begin{cases}
\text{First moment bound (Theorem~\ref{thm:first})}, & k \leq 4 \\
\text{Third moment bound (Theorem~\ref{thm:third})}, & 4 < k \leq 6 \\
\text{Variance bound \cite{costa2021variations}}, & k > 6
\end{cases}
\]
\end{remark}

\begin{figure}[h]
    \centering
    \includegraphics[width=0.7\linewidth]{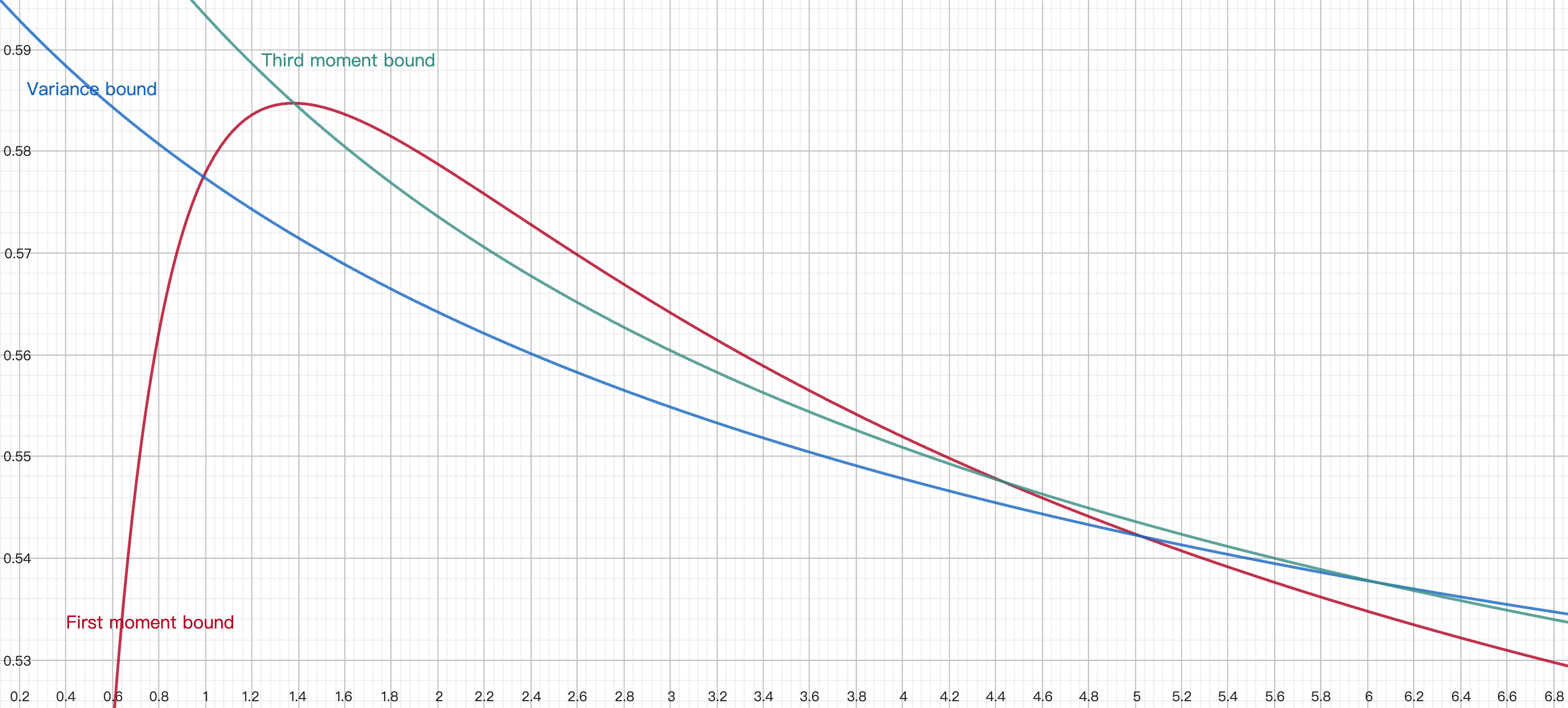}
    \caption{Comparison of coefficients relative to $k$ for three distinct methods}
    \label{coeff-comparison-relative-k}
\end{figure}

\section{Open Problems and Future Directions}

Several questions remain open:

\begin{enumerate}
\item \textbf{Higher-order statistics:} Can $\mathbb{E}[\|X\|_p^p]$ for $p > 3$ yield better bounds in some dimensional range?

\item \textbf{General statistics:} Can $\mathbb{E}[|X|_p^q]$ for some $p,q\in \mathbb{R}^+$ yield a best bounds for each dimensional range?

\item \textbf{Tight bounds:} What is the exact asymptotic behaviour of $M$ as a function of $n$ and $k$? This also encompasses Erd\H{o}s's conjecture.

\end{enumerate}

\section{Comment}

In this work, we have shown that although the variance method is a fundamental tool for the high-dimensional Erd\H{o}s problem, it is not always the most effective approach in every setting. By exploring alternative statistical bridges, we are able to obtain better bounds in lower dimensions and provide a framework for choosing suitable methods depending on the dimension. 

\bibliographystyle{plain}
\bibliography{dissertation.bib}

\end{document}